\documentclass[a4paper,reqno,11pt]{amsart}
\usepackage{amsmath}
\usepackage{amssymb}
\usepackage{cases}
\allowdisplaybreaks[4]
%%%%%%%%%%%%%%%
%\usepackage{showlabels}%%%
%%%%%%%%%%%%%%%

\newtheorem{thm}{Theorem}[section]
\newtheorem{prop}[thm]{Proposition}

\newtheorem{cor}[thm]{Corollary}

\newtheorem{conj}[thm]{Conjecture}

\theoremstyle{definition}
\newtheorem{definition}[thm]{Definition}
\newtheorem{defprop}[thm]{Definition-Proposition}

\newtheorem{example}[thm]{Example}

\theoremstyle{remark}

\numberwithin{equation}{section}

\newcommand{\bQ}{\mathbb{Q}}
\newcommand{\bP}{\mathbb{P}}

\newcommand\OO{{\mathcal{O}}}

%%%%%%% Chen %%%%%%%%%
\newcommand{\bC}{{\mathbb C}}

\newcommand\Sing{{\text{\rm Sing}}}
\newcommand\mult{{\text{\rm mult}}}

\newcommand\lct{{\text{\rm lct}}}
\newcommand\vol{{\text{vol}}}

\newcommand\qQ{{{\text{\rm q}}\mathbb{Q}}}
\newcommand\qW{{{\text{\rm qW}}}}
%%%%%%% Chen %%%%%%%%%

\begin{document}

\title{K-semistable Fano manifolds with the smallest alpha invariant}
\date{\today}
\author{Chen Jiang}
\address{Kavli IPMU (WPI), UTIAS, The University of Tokyo, Kashiwa, Chiba 277-8583, Japan.}
%\email{cjiang@ms.u-tokyo.ac.jp}
%\curraddr{Kavli IPMU (WPI), UTIAS, The University of Tokyo, Kashiwa, Chiba 277-8583, Japan.}
\email{chen.jiang@ipmu.jp}
\thanks{The author was supported by JSPS KAKENHI Grant Number JP16K17558 and World Premier International Research Center Initiative (WPI), MEXT, Japan.}

\maketitle

\begin{abstract}
In this short note, we show that K-semistable Fano manifolds with the smallest alpha invariant are projective spaces. Singular cases are also investigated.
\end{abstract}

\section{introduction}
Throughout the article, we work over the complex number field $\bC$.  A {\it $\bQ$-Fano variety} is a normal projective variety $X$ with log terminal singularities such that the anti-canonical divisor $-K_X$ is an ample Q-Cartier divisor.  It has been known that a {\it Fano manifold} $X$ (i.e., a smooth $\bQ$-Fano variety) admits
K\"ahler--Einstein metrics if and only if $X$ is {\it K-polystable} by the works
\cite{DT, Tia97, Don02, Don05, CT08, Sto09, Mab08, Mab09, Ber16} and
\cite{CDS15a, CDS15b, CDS15c, Tia15}. 

On the other hand, the existence of K\"ahler--Einstein metrics and K-stability are related to the {\it alpha invariants} $\alpha(X)$ of $X$ defined by Tian \cite{Tia87} (see also  \cite{TY87, Zel98, Lu00,Dem08}). Tian \cite{Tia87}  proved that for a Fano manifold $X$,  if $\alpha(X)>\dim X/(\dim X + 1)$, then $X$ admits K\"ahler--Einstein metrics. Odaka and Sano \cite[Theorem 1.4]{OS12} (see also its generalizations
\cite{Der14, BHJ15, FO, Fuj16b}) proved a variant of Tian's theorem: if a
$\bQ$-Fano variety $X$ satisfies that $\alpha(X) > \dim X/(\dim X + 1)$ (resp.
$\geq  \dim X/(\dim X + 1)$), then $X$ is K-stable (resp. K-semistable). We are interested in the relation of alpha invariants and K-semistability. 

Recall that Fujita and Odaka proved that there exists a lower bound of alpha invariants for K-semistable $\bQ$-Fano varieties.

\begin{thm}[{\cite[Theorem 3.5]{FO}}]
Let $X$ be a K-semistable $\bQ$-Fano variety of dimension $n$.

Then $\alpha(X)\geq \frac{1}{n+1}$.
\end{thm}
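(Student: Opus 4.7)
The plan is to reduce the statement to an inequality between invariants attached to divisorial valuations on $X$. For a prime divisor $E$ on some birational model $\pi\colon Y\to X$, let $A_X(E)$ denote the log discrepancy and set
\[
T_X(E):=\sup\{t\geq 0 : \pi^*(-K_X)-tE\text{ is big}\},\quad S_X(E):=\frac{1}{(-K_X)^n}\int_0^{T_X(E)}\vol(\pi^*(-K_X)-tE)\,dt.
\]
Two inputs feed the argument: the valuative criterion of K-semistability, which asserts $A_X(E)\geq S_X(E)$ for every such $E$, and the elementary bound $\text{ord}_E(D)\leq T_X(E)$ for every effective $\bQ$-divisor $D\sim_{\bQ}-K_X$, immediate from the definition of $T_X(E)$.

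The key computation is a concavity estimate. Since $t\mapsto\vol(\pi^*(-K_X)-tE)^{1/n}$ is concave on $[0,T_X(E)]$, equals $((-K_X)^n)^{1/n}$ at $t=0$, and vanishes at $t=T_X(E)$, it dominates the linear chord joining these two endpoints. Raising to the $n$-th power and integrating yields the sharp inequality $S_X(E)\geq T_X(E)/(n+1)$, so combining with K-semistability gives $A_X(E)\geq T_X(E)/(n+1)$ for every $E$.

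To conclude, fix any effective $D\sim_{\bQ}-K_X$ and let $E$ be a divisor computing its log canonical threshold, so that $\lct(X;D)=A_X(E)/\text{ord}_E(D)$. Then $\lct(X;D)\geq A_X(E)/T_X(E)\geq 1/(n+1)$, and taking the infimum over all such $D$ gives $\alpha(X)\geq 1/(n+1)$. The main obstacle is not the routine assembly of steps but rather invoking the right framework: essentially all of the K-stability content is packaged into the valuative inequality $A_X(E)\geq S_X(E)$, a nontrivial result of Fujita and Li, while the remaining ingredients (concavity of the volume function and the bound $\text{ord}_E(D)\leq T_X(E)$) are classical.
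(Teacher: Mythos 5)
Your argument is correct. Note, however, that the paper does not prove this theorem at all: it is quoted directly from Fujita--Odaka, so there is no in-paper proof to compare against. What can be compared is the computation in the proof of Proposition \ref{prop main}, which re-runs essentially your chain of inequalities in the equality case $\alpha(X)=\frac{1}{n+1}$, and the two routes differ in one interesting place. You bound $S_X(E)\geq T_X(E)/(n+1)$ via concavity of $\vol^{1/n}$ on the segment from $0$ to the pseudo-effective threshold, and then pass from $T_X(E)$ to $\mult_E D$ using $\mult_E D\leq T_X(E)$. The paper instead bounds the volume integral from below by $\int_0^\infty\vol_X(-K_X-xD)\,\mathrm{d}x=\frac{1}{n+1}(-K_X)^n$, using only $\sigma^*D\geq (\mult_F\sigma^*D)\,F$ and the exact formula $\vol_X(-K_X-xD)=(1-x)^n(-K_X)^n$; this gives $A(F)\geq \mult_F(\sigma^*D)/(n+1)$ directly, avoiding both the threshold $T_X(E)$ and the concavity theorem, and has the advantage of identifying precisely when equality holds (which is what Proposition \ref{prop main} needs). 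Your version is marginally stronger in that it yields the divisor-by-divisor bound $A_X(E)\geq T_X(E)/(n+1)$ independent of any chosen $D$, at the cost of invoking the log-concavity of the volume function. Both correctly isolate the valuative criterion $A_X(E)\geq S_X(E)$ (Definition-Proposition \ref{defprop k-ss}) as the only nonelementary input, and your remaining steps (existence of an lct-computing divisor on a log resolution, $\mult_E D\leq T_X(E)$, and the endpoint values of $\vol^{1/n}$) are all sound.
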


It is natural and interesting to ask when the equality holds. For example, it is well-known that $\bP^n$ is K-semistable with  $\alpha(\bP^n)= \frac{1}{n+1}$.
The main theorem of this paper is the following.
\begin{thm}\label{thm 1}
Let $X$ be a K-semistable Fano manifold of dimension $n$.

Then $\alpha(X)=\frac{1}{n+1}$ if and only if $X\cong \bP^n$.
\end{thm}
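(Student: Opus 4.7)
My plan is to follow the Fujita--Odaka proof of $\alpha(X)\geq 1/(n+1)$ and analyze its equality case. For a prime divisor $E$ over $X$ on a model $\pi\colon Y\to X$, let $A_X(E)$ denote the log discrepancy and set
\[
T_X(E) := \sup\{\,t\geq 0 : \vol(\pi^*(-K_X)-tE)>0\,\},\quad S_X(E) := \frac{1}{(-K_X)^n}\int_0^{T_X(E)}\vol(\pi^*(-K_X)-tE)\,dt.
\]
Log-concavity of $t\mapsto \vol(\pi^*(-K_X)-tE)^{1/n}$ on $[0,T_X(E)]$ gives $\vol(\pi^*(-K_X)-tE)\geq (-K_X)^n(1-t/T_X(E))^n$, hence $S_X(E)\geq T_X(E)/(n+1)$, with equality iff the chord is attained identically. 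K-semistability is equivalent to $A_X(E)\geq S_X(E)$ for every such $E$, so
\[
\frac{A_X(E)}{T_X(E)} \;=\; \frac{A_X(E)}{S_X(E)}\cdot\frac{S_X(E)}{T_X(E)} \;\geq\; \frac{1}{n+1},
\]
which on taking the infimum recovers the Fujita--Odaka bound.

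Under the hypothesis $\alpha(X)=1/(n+1)$, I would first produce a prime divisor $E$ realizing equality in both factors. A natural approach is to take near-minimizing effective $\bQ$-divisors $D_m\sim_{\bQ}-K_X$ with $\lct(X,D_m)\to 1/(n+1)$ together with divisorial valuations $v_m$ computing $\lct(X,D_m)$; since $A(v_m)/v_m(D_m)\geq A(v_m)/T_X(v_m)\geq 1/(n+1)$ in general, the quotients $A(v_m)/T_X(v_m)$ also tend to $1/(n+1)$. After normalizing by $A=1$, extract a subsequential limit $E$ using lower semicontinuity/compactness of the relevant invariants on the space of divisorial valuations. Both inequalities above must saturate for $E$: $A_X(E)=S_X(E)$ and $S_X(E)=T_X(E)/(n+1)$.

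Write $T=T_X(E)$. The equality $\vol(\pi^*(-K_X)-tE)=(-K_X)^n(1-t/T)^n$ throughout $[0,T]$ expands (on the ample portion of the ray) to the intersection identities
\[
\pi^*(-K_X)^{n-k}\cdot E^k \;=\; \frac{(-K_X)^n}{T^k}, \qquad k=0,1,\ldots,n.
\]
In particular $E\cdot\pi^*(-K_X)^{n-1}=(-K_X)^n/T>0$; by the projection formula $\pi_*E\cdot(-K_X)^{n-1}>0$, so $\pi_*E\neq 0$, i.e.\ $E$ is a prime divisor on $X$ itself (not $\pi$-exceptional). Hence $A_X(E)=1$ and $T=n+1$. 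Setting $L:=-K_X-(n+1)E$, the identities above yield $L\cdot(-K_X)^{n-1}=0$ and $L^2\cdot(-K_X)^{n-2}=0$; by the Hodge index theorem (the form $(\alpha,\beta)\mapsto\alpha\cdot\beta\cdot(-K_X)^{n-2}$ on $N^1(X)_{\mathbb{R}}$ has signature $(1,\rho(X)-1)$ for ample $-K_X$) one concludes $L\equiv 0$, i.e.\ $-K_X\equiv(n+1)E$. Since $\Pic(X)$ is torsion-free for a Fano manifold, $-K_X\sim(n+1)E$, so $X$ has Fano index at least $n+1$; by the Kobayashi--Ochiai theorem, $X\cong\bP^n$.

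The main obstacle is the extraction step: realizing the infimum defining $\alpha(X)$ by an actual divisorial valuation rather than merely a minimizing sequence. Without invoking general minimizer-existence theory, one must build $E$ carefully --- for instance as a controlled limit of log canonical centers of near-minimizing $\bQ$-divisors via Koll\'ar--Shokurov connectedness and standard vanishing --- and verify that the double equality survives the limit. Once $E$ is produced, the rigidity chain (extremal volume $\Rightarrow$ numerical proportionality $\Rightarrow\bP^n$) is fairly mechanical.
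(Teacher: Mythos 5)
Your outline follows the same circle of ideas as the paper (the valuative criterion $A\geq S$, the bound $S\geq T/(n+1)$, volume rigidity, and Kobayashi--Ochiai at the end), but it has a genuine gap exactly at the point you flag as ``the main obstacle'': producing a single prime divisor $E$ over $X$ on which both inequalities are equalities. The infimum $\alpha(X)=\inf_F A_X(F)/T_X(F)$ is over a non-compact set of divisorial valuations; a ``subsequential limit'' of the normalized $v_m$ is in general only a quasi-monomial valuation, the invariants $T$ and $S$ are not obviously continuous along such limits, and no off-the-shelf compactness or lower semicontinuity statement yields a divisorial minimizer. This is precisely where the paper invokes Birkar's theorem (\cite[Theorem 1.5]{B}, answering Tian's question): there is an actual effective $\bQ$-divisor $D\sim_\bQ -K_X$ with $\lct(X;D)=\alpha(X)=\frac{1}{n+1}$. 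The paper then takes $F$ to be a non-klt place of $(X,\frac{1}{n+1}D)$ and, crucially, does \emph{not} need $F$ to minimize $A/T$: instead it combines $\beta(F)\geq 0$ with the comparison $\vol_X(-K_X-x\mu F)\geq \vol_X(-K_X-xD)$ (valid because $\sigma^*D\geq \mu F$) to trap the whole chain between $\frac{1}{n+1}(-K_X)^n$ on both ends, forcing equality. Without Birkar's theorem or an equivalent minimizer-existence input, your argument does not close; with it, your extraction step should be replaced by the non-klt-place construction rather than a limiting procedure.

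A secondary, fixable issue: the identities $\pi^*(-K_X)^{n-k}\cdot E^k=(-K_X)^n/T^k$ do not follow from the volume identity by ``expanding on the ample portion of the ray'' when $E$ lives on a higher birational model, since $\pi^*(-K_X)-tE$ is then not nef for any $t>0$ and the volume is not the self-intersection polynomial. The correct route (the one in the paper) is to use differentiability of the volume function \cite[Corollary C]{BFJ09} to identify the derivative at $x=0$ with $n$ times the restricted volume $\vol_{Y|E}(-\pi^*K_X)$, deduce from its positivity that $E\not\subseteq \mathbf{B}_+(-\pi^*K_X)$ and hence that $E$ is not $\pi$-exceptional by \cite[Theorem C]{ELMNP09}, and only then convert restricted volumes into intersection numbers via \cite[Corollary 2.17]{ELMNP09}. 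Once $E$ is known to be a prime divisor on $X$ itself, your Hodge-index and Kobayashi--Ochiai endgame is fine and matches the paper's Corollary \ref{cor 1}(1); in fact the paper obtains $D=\mu\sigma_*F$, hence $-K_X\sim_\bQ(n+1)E$, directly from the vanishing of $(-K_X)^{n-1}\cdot(D-\mu\sigma_*F)$ and ampleness of $-K_X$, which is slightly more economical than the signature argument.
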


This is an application of Birkar's answer to Tian's question \cite[Theorem 1.5]{B}, and Fujita--Li's criterion for K-semistability \cite{Li15, F}.

It is natural to ask whether the same statement holds true for K-semistable $\bQ$-Fano varieties instead of manifolds. However, this is on longer true even in dimension $2$. We are grateful to Kento Fujita for kindly providing the following example:
\begin{example}Consider the cubic surface $X=(x_0^3=x_1x_2x_3)\subset \bP^3$, which is a toric {\it log del Pezzo surface} (i.e, a $\bQ$-Fano variety of dimension $2$) with $3$ du Val singularities of type $A_2$. On one hand, it is well-known that $X$ admits a K\"ahler--Einstein metric (cf. \cite{DT}), hence is K-semistable. On the other hand, $\alpha(X)=\frac{1}{3}$ (cf. \cite{PW}).

\end{example}
In fact, by the classification of possible du Val singularities of K-semistable log del Pezzo surfaces (cf. \cite[Corollary 6]{Liu}) and explicit computation of alpha invariants (cf. \cite{Park, PW, CK}), we have the following theorem.
\begin{thm}\label{thm dP}
Let $X$ be a K-semistable log del Pezzo surface with at worst du Val singularities.
Then $\alpha(X)=\frac{1}{3}$ if and only if $X\cong \bP^2$ or $X\subset \bP^3$ is a cubic surface with at least 2 singularities of type $A_2$. 

\end{thm}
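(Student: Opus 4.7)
The plan is to combine the two ingredients the author cites: Liu's classification of du Val singularity configurations admissible on a K-semistable log del Pezzo surface, and the case-by-case computation of alpha invariants carried out by Park, Park--Won, and Cheltsov--Kosta. Theorem \ref{thm 1} already disposes of the smooth situation (it forces $X\cong\bP^2$), so it suffices to treat the genuinely singular K-semistable log del Pezzo surfaces.

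For the ``if'' direction, $\alpha(\bP^2)=\tfrac13$ is classical (and matches the lower bound of Fujita--Odaka). For the cubic surface $Y\subset\bP^3$ with at least two $A_2$ singularities, K-semistability follows because $Y$ deforms to the toric example $(x_0^3=x_1x_2x_3)$ displayed just before the theorem (K-semistability is preserved under specialization of Fano families, and the $A_2+A_2$ and $3A_2$ cubics are well-studied), and $\alpha(Y)=\tfrac13$ is precisely the computation in \cite{PW}. Combined with Theorem 3.5 of \cite{FO} this gives the equality.

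For the ``only if'' direction I would argue as follows. Assume $X$ is a K-semistable log del Pezzo with at worst du Val singularities and $\alpha(X)=\tfrac13$. If $X$ is smooth, Theorem \ref{thm 1} gives $X\cong\bP^2$. If $X$ is singular, invoke \cite[Corollary 6]{Liu} to obtain the short list of possible singularity profiles on $X$. This list is small enough that one can go through it together with the Picard rank and standard surface classification to enumerate the possible deformation families of $X$. For each such family, the alpha invariant has been computed (or bounded strictly above $\tfrac13$) in \cite{Park, PW, CK}; I would cross-check each entry against these tables, ruling out every family except cubic surfaces carrying at least two $A_2$ points. The inequality $\alpha(X)\geq\tfrac{1}{n+1}=\tfrac13$ from \cite[Theorem 3.5]{FO} serves as a sanity check on which families can even appear.

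The main obstacle is bookkeeping rather than a single deep step: one must make sure that Liu's list of admissible singularity types is matched exactly, family by family, with the alpha invariant computations in \cite{Park, PW, CK}, and that no K-semistable family slips through an uncomputed gap. A secondary subtlety is that some of the cited computations give $\alpha$ for a \emph{general} member of a family, so for the remaining (non-cubic) families I must confirm that $\alpha>\tfrac13$ holds for \emph{every} K-semistable member, not only the generic one; this can be handled by the lower semicontinuity of $\alpha$ in families (or by the explicit form of the computations, which typically produce a uniform lower bound strictly greater than $\tfrac13$).
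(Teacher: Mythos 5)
Your ``only if'' direction is essentially the paper's argument: intersect Liu's list of singularity configurations admissible on a K-semistable log del Pezzo surface (recorded in the paper as Theorem \ref{dp 1}, organized by degree $(-K_X)^2$) with the Park/Park--Won/Cheltsov--Kosta classification of singular du Val del Pezzo surfaces having $\alpha=\frac13$ (Theorem \ref{dp 2}). The bookkeeping you anticipate is lighter than you fear: both lists are indexed by the pair (degree, singularity configuration), so one simply checks degree by degree that the only compatible entry is $(-K_X)^2=3$ with $\Sing(X)\supseteq\{2A_2\}$; no enumeration of deformation families or Picard ranks is needed, and your worry about ``general member versus every member'' does not arise because Theorem \ref{dp 2} is stated for each individual surface, not for a generic one in a family.

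The genuine gap is in your ``if'' direction. To show that a cubic surface with at least two $A_2$ points is K-semistable, you invoke ``K-semistability is preserved under specialization of Fano families'' and a deformation to the toric cubic $(x_0^3=x_1x_2x_3)$. This is backwards and, as stated, false: K-semistability is \emph{not} closed under specialization (smooth K-stable del Pezzo surfaces degenerate to non-K-semistable ones). The toric $3A_2$ cubic is a \emph{degeneration} of the $2A_2$ cubics, so what you would actually need is that K-semistability passes from the special fibre to nearby fibres, i.e.\ openness of K-semistability in $\bQ$-Gorenstein families --- a statement that is much deeper than anything else in this argument and was not available at the time. The paper avoids this entirely by citing \cite[Theorem 4.3]{OSS16}, which identifies the K-(semi)stable cubic surfaces with the GIT-(semi)stable ones and in particular shows that every cubic surface with at worst $A_1$ or $A_2$ singularities is K-semistable. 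You should replace your deformation argument with that citation (note that Theorem \ref{dp 1}(3) guarantees the remaining singularities of your surface are indeed only of type $A_1$ or $A_2$, so the hypothesis of the OSS16 result is met).
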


Moreover, by classification of $\bQ$-Fano 3-fold with $\bQ$-factorial terminal singularities and $\rho(X)=1$ with large Fano index due to Prokhorov \cite{Pro10, Pro13}, we prove the following:

\begin{thm}\label{thm 3fold}
Let $X$ be a K-semistable $\bQ$-Fano 3-fold with $\bQ$-factorial terminal singularities and $\rho(X)=1$. Assume that $h^0(-K_X)\geq 22$.
Then $\alpha(X)=\frac{1}{4}$ if and only if $X\cong \bP^3$. 

\end{thm}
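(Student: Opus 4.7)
The ``if'' direction is standard since $\alpha(\bP^n)=\frac{1}{n+1}$. For the ``only if'' direction, assume $X$ is K-semistable with $\alpha(X)=\frac{1}{4}$.

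The plan is to combine Prokhorov's classification with the Fujita--Li criterion. First, the hypothesis $h^0(-K_X)\geq 22$, together with Riemann--Roch for terminal $\bQ$-Fano 3-folds (bounding $(-K_X)^3$ from above and relating it to the Fano index), forces the Fano--Weil index of $X$ to be large (roughly $q_W(X)\geq 3$), placing $X$ within the scope of Prokhorov's classification \cite{Pro10, Pro13} of $\bQ$-factorial terminal $\bQ$-Fano 3-folds with $\rho=1$ and large index. This reduces the problem to checking a short explicit list, comprising $\bP^3$, the smooth quadric $Q_3\subset\bP^4$, and a small number of weighted (complete intersection) models.

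For each candidate $X$ on the list with $X\not\cong\bP^3$, one must show that $\alpha(X)>\frac{1}{4}$ or that $X$ is not K-semistable, contradicting our assumptions. For the smooth quadric, the classical value $\alpha(Q_3)=\frac{3}{4}$ settles the case. For each singular candidate, I would proceed in one of two ways: either (i) compute $\alpha(X)$ directly---many of these are weighted projective spaces or their hypersurfaces, for which alpha invariants are accessible via polytope methods or Cheltsov--Shramov-type arguments; or (ii) exhibit a divisorial valuation $v$ over $X$ (for instance, the weighted blow-up of a terminal singular point, or the valuation along a coordinate hyperplane of the weighted ambient space) with $A_X(v)/S_X(v)<1$, so that the Fujita--Li valuative criterion certifies K-instability.

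The main obstacle lies in this case analysis: for each singular entry of Prokhorov's list, an explicit alpha invariant computation in the presence of terminal singularities, or the identification of a destabilizing valuation, has to be carried out. These computations are delicate, especially for the weighted complete intersection cases. The hypothesis $h^0(-K_X)\geq 22$ is essentially the minimal numerical condition under which the list of candidates is short enough to permit such a case-by-case treatment; relaxing it would require handling $\bQ$-Fano 3-folds of smaller index, where both Prokhorov's classification and the relevant alpha invariant calculations become significantly more involved.
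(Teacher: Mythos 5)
Your reduction to Prokhorov's classification rests on a false step. The hypothesis $h^0(-K_X)\geq 22$ does \emph{not}, via Riemann--Roch, force the Fano--Weil index to be at least $3$: the quintic del Pezzo threefold $V_5$ is a smooth K-semistable Fano threefold with $\rho=1$, index $2$, and $h^0(-K_{V_5})=23$, so it satisfies all of your numerical hypotheses while escaping ``large index''. The correct entry point, which is the heart of the argument and is missing from your proposal, is Proposition \ref{prop main}: K-semistability together with $\alpha(X)=\frac14$ produces (via Birkar's theorem on the attainment of the alpha invariant and the Fujita--Li criterion) a prime divisor $E$ with $-K_X\sim_{\bQ}4E$, whence $4\mid\qQ(X)$ by \cite[Lemma 3.2]{Pro10} and so $\qQ(X)\in\{4,8\}$. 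Only after this does $h^0(-K_X)\geq 22$ (i.e.\ $g(X)\geq 20$) place $X$ in the range of \cite{Pro13} and the graded ring database, yielding a list of just a handful of candidates besides $\bP^3$. Without using $\alpha(X)=\frac14$ structurally at the outset, your candidate list would have to include every index compatible with $h^0\geq 22$ and would not be ``short''.

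Second, you leave the exclusion of the non-$\bP^3$ candidates as an acknowledged ``main obstacle'', i.e.\ the case analysis is not actually carried out; and the method you sketch (computing $\alpha$ of each weighted model, or hunting for a destabilizing valuation) is not what is needed. The paper excludes the index-$8$ candidates by the trivial remark that $-K_X\sim 8A$ with $A$ effective forces $\alpha(X)\leq\frac18$, and excludes the index-$4$ singular candidates ($X_4\subset\bP(1,1,1,2,3)$ and the two numerical cases with $g(X)=20,21$) by an a priori volume bound for K-semistable terminal $\bQ$-Fano threefolds in terms of the local index $r$ of a singular point, namely $(-K_X)^3\leq\frac{(r+2)(4+4r)^3}{(3r)^3}$ (Theorem \ref{terminal volume}, obtained from Liu's volume estimate combined with $\lct(X;\mathfrak{m}_p)\leq 1+\frac1r$ via Kawamata's minimal discrepancy and $\mult_pX\leq r+2$). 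No alpha-invariant computation on the candidates is required. As it stands, your proposal neither establishes the reduction nor completes the verification.
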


Finally, we propose the following much stronger conjecture. For some evidence in dimension $3$, we refer to \cite{CS} and \cite{Fuj15}.
\begin{conj}
Let $X$ be a K-semistable Fano manifold.

Then $\alpha(X)<\frac{1}{n}$ if and only if $X\cong \bP^n$.
\end{conj}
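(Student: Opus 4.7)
The plan is to extend the strategy used in Theorem~\ref{thm 1}, which couples Birkar's boundedness theorem with the Fujita--Li valuative criterion for K-semistability. The ``if'' direction is classical since $\alpha(\bP^n)=\frac{1}{n+1}<\frac{1}{n}$. For the substantive direction, assume that $X$ is a K-semistable Fano manifold of dimension $n$ with $\alpha(X)<\frac{1}{n}$, and aim to prove $X\cong\bP^n$.

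I would use the valuative description $\alpha(X)=\inf_v A_X(v)/T_X(v)$, where $v$ ranges over divisorial valuations on $X$ and $T_X(v)$ denotes the pseudo-effective threshold of $v$ with respect to $-K_X$. The hypothesis produces (in the limit) a divisorial valuation $v$ with $T_X(v)>n\,A_X(v)$. The Fujita--Li criterion yields $A_X(v)\ge S_X(v)$, while Fujita's standard inequality yields $T_X(v)\le(n+1)\,S_X(v)$. Together one falls into the narrow regime
\[
n\,S_X(v)\;<\;T_X(v)\;\le\;(n+1)\,S_X(v),
\]
which forces $v$ to nearly saturate Fujita's inequality. On $\bP^n$ this extremal profile is realised by scalar multiples of the valuation of a hyperplane; the proof should therefore reduce to a \emph{rigidity} statement asserting that near-saturation of Fujita's inequality characterizes $(\bP^n,\mathrm{ord}_H)$ among pairs $(X,v)$ with $X$ K-semistable Fano.

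To establish the rigidity I would pursue two complementary routes. The geometric route is to analyse the volume profile $t\mapsto\vol(-K_X-tv)$: its near-linear decay in the above regime, combined with convexity, should force the Okounkov body of $-K_X$ along a flag refining $v$ to degenerate to the standard simplex, and a K-polystable degeneration argument should then identify $X$ with $\bP^n$. The classificatory route is to invoke Birkar's boundedness: K-semistable Fano manifolds of dimension $n$ with $\alpha<\frac{1}{n}$ form a bounded family, so one could attempt to enumerate the deformation classes and check the conjecture case by case, generalizing Theorems~\ref{thm dP} and \ref{thm 3fold}. The hardest step is exactly this rigidity: unlike the case $\alpha=\frac{1}{n+1}$ of Theorem~\ref{thm 1}, the strict inequality $\alpha<\frac{1}{n}$ saturates none of the structural inequalities, so one needs either a quantitative tightening of Fujita's inequality that singles out $\bP^n$, or a delicate case analysis on the bounded family. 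The evidence in dimension $3$ recorded in \cite{CS, Fuj15} suggests the case-by-case route is viable for small $n$, while a uniform proof for all $n$ appears to require genuinely new techniques.
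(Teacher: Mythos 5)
This statement is stated in the paper as a \emph{conjecture}, not a theorem: the author explicitly proposes it as a ``much stronger'' open problem and offers no proof, only pointers to partial evidence in dimension $3$ (\cite{CS}, \cite{Fuj15}). So there is no proof in the paper against which to match your argument, and your proposal does not close the gap either. Your setup is sound as far as it goes: $\alpha(X)<\frac{1}{n}$ does produce a divisorial valuation $v$ with $T_X(v)>n\,A_X(v)$, K-semistability gives $A_X(v)\geq S_X(v)$ via the Fujita--Li criterion, and concavity of $\vol^{1/n}$ gives $T_X(v)\leq (n+1)S_X(v)$, so the pinching $nS_X(v)<T_X(v)\leq (n+1)S_X(v)$ is correct. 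But everything then hinges on the ``rigidity'' assertion that near-saturation of $T\leq (n+1)S$ forces $X\cong\bP^n$, and you supply no argument for it --- indeed you concede that it ``appears to require genuinely new techniques.'' Note the contrast with Theorem \ref{thm 1}: there the hypothesis $\alpha(X)=\frac{1}{n+1}$ makes \emph{every} inequality in the chain an equality, which is what lets Proposition \ref{prop main} extract the divisor $E$ with $-K_X\sim_{\bQ}(n+1)E$ via differentiability of volumes and restricted volumes. Under the open condition $\alpha(X)<\frac{1}{n}$ nothing is saturated, so that mechanism produces no divisor and no structural conclusion.

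The two routes you sketch for the rigidity step are also not viable as stated. The Okounkov-body/degeneration route would at best identify a K-polystable degeneration of $X$ with $\bP^n$, and you would still need to rule out nontrivial such degenerations (i.e., prove $X$ itself is $\bP^n$), which is a separate rigidity problem. The classificatory route fails for general $n$: boundedness of Fano manifolds in a fixed dimension does not yield a finite, checkable list of deformation classes with computable alpha invariants, and $\alpha$ is not constant in families. In short, the statement remains a conjecture, and your proposal is a reasonable research program rather than a proof; the essential missing ingredient is precisely the characterization of $\bP^n$ in the non-extremal regime.
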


{\it Acknowledgments.} The author would like to thank Professors Kento Fujita and Yoshinori Gongyo for effective discussions. The main part of this paper was written during the author enjoyed the workshop ``Higher Dimensional Algebraic Geometry, Holomorphic Dynamics and Their Interactions" at Institute for Mathematical Sciences, National University of Singapore. The author is grateful for the hospitality and support of IMS.

\section{Preliminaries}
We adopt the standard notation and definitions in \cite{KM} and will freely use them.

\begin{definition}Let $X$ be a $\bQ$-Fano variety. The {\it alpha
invariant} $\alpha(X)$ of $X$ is defined by the supremum of positive rational
numbers $\alpha$ such that the pair $(X, \alpha D)$ is log canonical for any effective
$\bQ$-divisor $D$ with $D \sim_\bQ -K_X$. In other words, 
$$
\alpha(X):=\inf\{\lct(X; D)| 0\leq D \sim_\bQ -K_X\}.
$$
\end{definition}
Tian \cite{Tia90} asked whether whether the infimum is a minimum, which is answered by Birkar affirmatively.
\begin{thm}[{\cite[Theorem 1.5]{B}}]\label{Birkar answer}
Let $X$ be a $\bQ$-Fano variety. Assume that $\alpha(X)\leq 1$. Then there exists an effective $\bQ$-divisor $D$ such that $D \sim_\bQ -K_X$ and 
$\lct(X; D)=\alpha(X)$.
\end{thm}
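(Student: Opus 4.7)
The plan is to combine a minimizing-sequence argument with Birkar's own boundedness of complements to reduce the problem to a compactness statement inside the fixed finite-dimensional linear system $|{-NK_X}|$.

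First, I would pick effective $\bQ$-divisors $D_i \sim_\bQ -K_X$ with $c_i := \lct(X;D_i) \searrow \alpha(X) \leq 1$, together with a prime divisor $E_i$ over $X$ computing $\lct(X;D_i)$, so that $A_X(E_i) = c_i \,\text{ord}_{E_i}(D_i)$. Since each pair $(X, c_iD_i)$ is log canonical and $-(K_X+c_iD_i) \sim_\bQ (1-c_i)(-K_X)$ is nef and big (or trivial), Birkar's boundedness of complements produces an integer $N=N(n)$ and an effective $\bQ$-divisor $\Gamma_i \geq c_iD_i$ with $(X,\Gamma_i)$ log canonical and $N\Gamma_i \in |{-NK_X}|$; then $E_i$ is automatically an lc place of $(X,\Gamma_i)$, so $A_X(E_i)=\text{ord}_{E_i}(\Gamma_i)$.

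Next, the $\Gamma_i$ range over the fixed compact space $\frac{1}{N}|{-NK_X}|$, so after passing to a subsequence $\Gamma_i\to\Gamma$. On a suitable common log resolution of this bounded family, only finitely many prime divisors can serve as lc places of the pairs $(X,\Gamma_i)$, so by pigeonhole a further subsequence satisfies $E_i = E$ for a single prime divisor $E$ over $X$. Writing $T_X(E):=\sup\{\text{ord}_E(D'):0\leq D' \sim_\bQ -K_X\}$, the identity $A_X(E)=c_i\,\text{ord}_E(D_i)$ combined with $c_i \to \alpha(X)$ and the trivial bounds $\text{ord}_E(D_i) \leq T_X(E)$ and $T_X(E) \leq A_X(E)/\alpha(X)$ (the latter from the definition of $\alpha(X)$) would give
\[
 \frac{A_X(E)}{\alpha(X)} = \lim_{i\to\infty}\text{ord}_E(D_i) \leq T_X(E) \leq \frac{A_X(E)}{\alpha(X)},
\]
hence $\alpha(X)=A_X(E)/T_X(E)$.

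To conclude, I would use that $X$ is $\bQ$-Fano: the anti-canonical section ring is finitely generated (by BCHM), so $T_X(E)$ is rational and attained by some effective $\bQ$-divisor $D \sim_\bQ -K_X$ with $\text{ord}_E(D) = T_X(E)$. For such $D$,
\[
 \alpha(X)\leq\lct(X;D)\leq\frac{A_X(E)}{\text{ord}_E(D)}=\frac{A_X(E)}{T_X(E)}=\alpha(X),
\]
so $\lct(X;D)=\alpha(X)$, as required. The hardest step will be the extraction of a common limiting lc place $E$: this rests on the boundedness of exceptional divisors that can appear as lc places of log canonical pairs varying in a bounded family, itself a nontrivial consequence of the complements/ACC-type machinery underlying Birkar's theorem.
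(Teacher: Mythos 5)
First, a remark on the comparison itself: the paper does not prove this statement — it is quoted verbatim from Birkar \cite[Theorem 1.5]{B} and used as a black box — so there is no internal proof to measure your argument against. Judged on its own terms, your overall strategy (use boundedness of complements to replace the unbounded minimizing family $D_i$ by divisors $\Gamma_i$ in the fixed finite-dimensional linear system $\frac{1}{N}|-NK_X|$, then conclude by a finiteness argument there) has the right general shape and is close in spirit to what Birkar actually does.

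There is, however, a genuine gap at exactly the step you flag as the hardest, and it is not a technicality. The claim that ``on a suitable common log resolution of this bounded family, only finitely many prime divisors can serve as lc places of the pairs $(X,\Gamma_i)$'' is false as stated. A bounded family of pairs admits no single common log resolution, and even after stratifying the parameter space $|-NK_X|$ and taking simultaneous log resolutions over each stratum, the resulting divisorial valuations over $X$ vary with the parameter. Concretely, for $X=\bP^2$ and $N=1$, every smooth cubic $C\in|-K_{\bP^2}|$ gives an lc pair $(\bP^2,C)$ whose unique lc place is $C$ itself, so the lc places of lc members of $\frac{1}{N}|-NK_X|$ already form a positive-dimensional family; the pigeonhole extraction of a single $E$ therefore does not go through, and with it the chain of equalities forcing $\alpha(X)=A_X(E)/T_X(E)$ collapses. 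This is precisely where the real content of Birkar's theorem lies: his proof does not pigeonhole on lc places, but shows (by an induction using complements, boundedness of certain Fano fibrations, and the ACC for lc thresholds) that $\alpha(X)$ equals $\lct\big(X,\frac{1}{m}|-mK_X|\big)$ for a suitable $m$, after which attainment follows from lower semicontinuity and constructibility of the lct function on a finite-dimensional linear system. Two further points would need care even within your framework: the complement theorem producing $\Gamma_i\geq c_iD_i$ requires the boundary coefficients to lie in a fixed hyperstandard (or at least DCC) set, which the arbitrary rational coefficients of $c_iD_i$ do not satisfy without an additional reduction; and when $\alpha(X)=1$ a minimizing sequence has $c_i\geq 1$, so $-(K_X+c_iD_i)$ need not be nef and that boundary case must be treated separately. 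The final step (rationality and attainment of $T_X(E)$ via finite generation) is fine.
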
 
\begin{definition}[{\cite{F}}]Let $X$ be a $\bQ$-Fano variety of dimension $n$. Take any
projective birational morphism $\sigma : Y \to X$ with $Y$ normal and any
prime divisor $F$ on Y , that is, $F$ is a prime divisor {\it over} $X$.
\begin{enumerate}
\item Define the {\it log discrepancy} of $F$ as $A(F):=\mult_F (K_Y -\sigma^*K_X ) + 1$;
\item Define $\vol_X(-K_X-xF):=\vol_Y(-\sigma^*K_X-xF)$;
\item Define 
$$
\beta(F):=A(F)\cdot(-K_X)^n-\int_{0}^\infty \vol_X(-K_X-xF)\,\textrm{d}x.
$$
\end{enumerate}
\end{definition}
Note that the definitions do not depend on the choice of birational model $Y$.

Instead of recalling the original definition, we use the following criterion to define K-semistability.
\begin{defprop}[{\cite[Corollary 1.5]{F}, \cite[Theorem 3.7]{Li15}}]\label{defprop k-ss} Let $X$ be a $\bQ$-Fano variety.
$X$ is {\it K-semistable} if $\beta(F)\geq 0$ for any divisor $F$ over $X$.
\end{defprop}

Note that K-semistability is known to be equivalent to {\it Ding-semistability} by \cite{BBJ15}.
\section{Proof of main theorem}
\begin{prop}\label{prop main}
Let $X$ be a K-semistable $\bQ$-Fano variety of dimension $n$.
Assume that $\alpha(X)= \frac{1}{n+1}$, then there exists a prime divisor $E$ on $X$ such that $-K_X\sim_\bQ (n+1)E$ and $(X, E)$ is plt.
\end{prop}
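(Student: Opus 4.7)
The plan is to extract a log canonical place of the optimal divisor provided by Theorem~\ref{Birkar answer}, bound its pseudo-effective threshold by K-semistability, and then pin down the equality case through the Hodge Index Theorem. First I would apply Birkar's theorem to obtain an effective $\bQ$-divisor $D\sim_\bQ -K_X$ with $\lct(X;D)=1/(n+1)$, and pick a prime divisor $F$ over $X$ realizing this threshold, so $A(F)=\mult_F(D)/(n+1)$. Then choose a birational morphism $\pi\colon Y\to X$ from a normal projective variety having $F$ as the unique $\pi$-exceptional divisor (a standard plt extraction via the MMP). Writing $\pi^*D=\tau F+\tilde D$ with $\tau:=\mult_F(D)=(n+1)A(F)$ and $\tilde D\geq 0$ not containing $F$, we have $\pi^*(-K_X)-\tau F\sim_\bQ\tilde D\geq 0$, so the pseudo-effective threshold satisfies $\tau(F)\geq\tau$.

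Next I would invoke K-semistability via Definition-Proposition~\ref{defprop k-ss}: $\beta(F)\geq 0$. Combined with the standard concavity of $t\mapsto\vol_X(-K_X-tF)^{1/n}$, which gives $\vol_X(-K_X-tF)\geq (1-t/\tau(F))^n(-K_X)^n$ on $[0,\tau(F)]$, integration yields $A(F)(-K_X)^n\geq\tfrac{\tau(F)}{n+1}(-K_X)^n$, i.e.\ $\tau(F)\leq (n+1)A(F)=\tau$. Hence $\mult_F(D)=\tau(F)=(n+1)A(F)=:\tau$, $\beta(F)=0$, and the concavity estimate becomes an equality: $\vol_X(-K_X-tF)=(1-t/\tau)^n(-K_X)^n$ for all $t\in[0,\tau]$.

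The crucial rigidity step is to upgrade this to $\tilde D=0$ together with $F\subset X$. Set $L_0:=\pi^*(-K_X)$. Because $\pi$ extracts only $F$, every $\pi$-contracted curve lies in $F$ and the negativity lemma gives $F\cdot C\leq 0$ on such curves, so $L_0-tF$ is nef on an interval $[0,\epsilon]$ with $\epsilon>0$. On that interval $\vol(L_0-tF)=(L_0-tF)^n$ is a polynomial in $t$; matching it with $(1-t/\tau)^nL_0^n$ coefficient by coefficient forces $L_0^{n-k}\cdot F^k=L_0^n/\tau^k$ for every $0\leq k\leq n$. Setting $\nu:=L_0-\tau F$, direct computation gives $\nu\cdot L_0^{n-1}=0$ and $\nu^2\cdot L_0^{n-2}=0$, so the Hodge Index Theorem applied to the nef and big class $L_0$ on $Y$ (after passing to a resolution if necessary) forces $\nu\equiv 0$. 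Since $\tilde D\sim_\bQ\nu$ is effective and numerically trivial, $\tilde D=0$. If $F$ were $\pi$-exceptional, pushing $L_0\equiv\tau F$ down to $X$ would yield $-K_X\equiv 0$, contradicting ampleness; hence $F=E$ is a prime divisor on $X$ itself, with $A(E)=1$, $\tau=n+1$, and $-K_X\sim_\bQ(n+1)E$ (using $h^1(\OO_X)=0$ on the klt Fano $X$ to promote numerical to $\bQ$-linear equivalence).

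Finally, $(X,E)=(X,\tfrac{1}{n+1}D)$ is log canonical by construction; plt-ness I would prove by contradiction. If $F'\neq E$ were a second lc place, then $\mult_{F'}(D)=(n+1)\mult_{F'}(E)=(n+1)A(F')$, so $F'$ satisfies the same hypotheses and the rigidity step applies to $F'$ verbatim. If $F'$ is a prime divisor on $X$ distinct from $E$, then $A(F')=1$ while $\mult_{F'}(E)=0$, contradicting the lc-place identity $A(F')=\mult_{F'}(E)$. If $F'$ is exceptional over $X$, the rigidity gives $\pi'^*(-K_X)\equiv\tau(F')F'$ on the extraction $\pi'\colon Y'\to X$ of $F'$, and pushing forward yields $-K_X\equiv 0$, again absurd. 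Hence $E$ is the unique lc place and $(X,E)$ is plt. The main obstacle is the rigidity step: one must secure a birational model on which $L_0-tF$ stays nef on a positive interval so that the polynomial matching is valid, and then apply the Hodge Index Theorem carefully to collapse the polynomial identity to numerical proportionality.
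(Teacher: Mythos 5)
Your overall architecture is sound and reaches the same conclusion as the paper, but by a genuinely different route in the key technical step. The paper works on an arbitrary resolution, compares $\vol_X(-K_X-x\mu F)$ with $\vol_X(-K_X-xD)$ using $\sigma^*D\geq\mu F$ to force equality in $\beta(F)\geq 0$, and then extracts the intersection-theoretic identity $(-K_X)^{n-1}\cdot\sigma_*F=(-K_X)^n/\mu$ from the differentiability of volumes \cite{BFJ09} together with the theory of restricted volumes and augmented base loci \cite{ELMNP09}. You instead pass to a plt extraction of $F$, obtain the equality case from the concavity of $\vol^{1/n}$ and the pseudo-effective threshold, and read off the same intersection numbers by matching the polynomial $(L_0-tF)^n$ with $(1-t/\tau)^nL_0^n$ on an interval where $L_0-tF$ is nef. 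This trades the analytic input (BFJ differentiability, restricted volumes) for the existence of the extraction with $-F$ relatively ample (a BCHM-type statement you should justify, since it is what guarantees nefness of $L_0-tF$ for small $t>0$); both are legitimate, and your final steps (effectivity plus ampleness of $-K_X$ to get $D=\tau\,\pi_*F$, exclusion of the exceptional case, and the uniqueness-of-lc-place argument for plt-ness) match the paper's, with the plt-ness argument spelled out more carefully than in the paper.

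The one assertion that does not hold as written is the Hodge Index step: for $L_0=\pi^*(-K_X)$ merely nef and big, the conditions $\nu\cdot L_0^{n-1}=0$ and $\nu^2\cdot L_0^{n-2}=0$ do \emph{not} force $\nu\equiv 0$, because the quadratic form $D\mapsto D^2\cdot L_0^{n-2}$ is degenerate on pullbacks: any $\pi$-exceptional divisor class $G$ satisfies $G\cdot L_0^{n-1}=0$ and $G^2\cdot L_0^{n-2}=0$ without being numerically trivial (already for the blow-up of a point). Passing to a resolution does not repair this, since $L_0$ stays non-ample. Fortunately you do not need $\nu\equiv 0$: the single coefficient identity $L_0^{n-1}\cdot F=L_0^n/\tau$ gives $(-K_X)^{n-1}\cdot\pi_*\tilde D=(-K_X)^{n-1}\cdot D-\tau\,(-K_X)^{n-1}\cdot\pi_*F=0$, and since $\pi_*\tilde D$ is effective and $-K_X$ is ample this forces $\pi_*\tilde D=0$; hence $\tilde D$ is $\pi$-exceptional, and as $F$ is the unique $\pi$-exceptional prime divisor and $F\not\subseteq\operatorname{Supp}\tilde D$, you get $\tilde D=0$ directly. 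With that substitution (which is in effect the paper's own argument) your proof is complete.
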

\begin{proof}Let $X$ be a K-semistable $\bQ$-Fano variety of dimension $n$ with $\alpha(X)= \frac{1}{n+1}$.
By Theorem \ref{Birkar answer}, there is a divisor $D\sim_\bQ -K_X$ such that $\lct(X; D)=\frac{1}{n+1}$.
Take $F$ to be a non-klt place of $(X, \frac{1}{n+1}D)$, %by \cite[Corollary 1.4.3]{BCHM}, 
then there is a resolution $\sigma:Y\to X$ such that $F$ is a divisor on $Y$.% and $-F$ is $\sigma$-ample (cf. \cite[Lemma 2.62]{KM}). Note that if $F$ is a divisor on $X$ then $\sigma$ is just the identity. 

Denote $\mu$ to be the multiplicity of $F$ in $\sigma^*D$. Note that $\mu>0$ since $X$ is klt. By assumption, 
$$
\mult_F\bigg(K_Y-\sigma^*\bigg(K_X+\frac{1}{n+1}D\bigg)\bigg)=-1,
$$
which means that
$$A(F)=\frac{\mu}{n+1}.$$
By Definition-Proposition \ref{defprop k-ss}, $\beta(F)\geq 0$, which means that
\begin{align*}
\frac{1}{n+1}(-K_X)^n= {}& \frac{A(F)}{\mu}(-K_X)^n\\
\geq{}& \frac{1}{\mu}\int_0^\infty\vol_X(-K_X-xF) \,{\mathrm d}x\\
={}&\int_0^\infty\vol_X(-K_X-x\mu F)\,{\mathrm d}x\\
\geq{}&\int_0^\infty\vol_X(-K_X-xD)\,{\mathrm d}x\\
={}&\int_0^1 (1-x)^n(-K_X)^n\,{\mathrm d}x\\
={}&\frac{1}{n+1}(-K_X)^n.
\end{align*}
The second equality holds since $\sigma^*D\geq \mu F$.
Hence all inequalities become equalities. In particular, 
$$\vol_X(-K_X-x \mu F)=\vol_X(-K_X-xD)=(1-x)^n(-K_X)^n$$
for almost all $x$. %Take $x$ small enough so that $-\sigma^*K_X-x\mu F$ and $-K_X-x D$ are nef, then
By differentiability of volume functions (\cite[Corollary C]{BFJ09}),
\begin{align*}
{}&\mu \cdot \vol_{Y|F}(-\sigma^*K_X)\\
={}&-\frac{1}{n}\frac{\textrm{d}}{\textrm{d}x}\bigg|_{x=0}\vol_Y(-\sigma^*K_X-x\mu F)\\
={}&-\frac{1}{n}\frac{\textrm{d}}{\textrm{d}x}\bigg|_{x=0}(1-x)^n(-K_X)^n\\
={}&(-K_X)^n.
\end{align*}
%for almost all $0<x\ll 1.$
%In particular, comparing the coefficients of $x$, we have
%$$
%(-\sigma^*K_X)^{n-1}\cdot \mu F=(-K_X)^{n-1}\cdot D=(-K_X)^n.
%$$
%If $F$ is exceptional over $X$, then $(-\sigma^*K_X)^{n-1}\cdot F=0$, which is a contradiction. Hence $F$ is a divisor on $X$, $\sigma$ is the identity, and
%$$
%(-K_X)^{n-1}\cdot (D-\mu F)=0.
%$$
Here $\vol_{Y|F}$ is the {\it restricted volume}, we refer to \cite{ELMNP09} for definition and properties.
Since $\vol_{Y|F}(-\sigma^*K_X)>0,$ $F\not \subseteq \mathbf{B}_+(-\sigma^*K_X)$ by \cite[Theorem C]{ELMNP09}.
Hence by \cite[Corollary 2.17]{ELMNP09}, 
$$\vol_{Y|F}(-\sigma^*K_X)=(-\sigma^*K_X)^{n-1}\cdot F=(-K_X)^{n-1}\cdot \sigma_*F.$$
In other words, we have
$$
(-K_X)^{n-1}(D-\mu\sigma_*F)=(-K_X)^n-\mu \cdot \vol_{Y|F}(-\sigma^*K_X)=0.
$$
This implies that  $D=\mu \sigma_* F$ since $D\geq \mu  \sigma_*  F$ and $-K_X$ is ample. In particular, $F$ is not $\sigma$-exceptional and $\sigma_*F$ is a prime divisor on $X$. Denote $E:=\sigma_*F$. Moreover, since $F$ is a non-klt place of $(X, \frac{1}{n+1}D)$, $\mult_E \frac{1}{n+1}D=1$, that is, $\mu=n+1$. In particular, $-K_X\sim_\bQ D=(n+1)E$. 
Finally, this argument shows that $F$ is the only non-klt place of $(X, E)$, which means that $(X, E)$ is plt.
\end{proof}

\begin{cor}\label{cor 1}
Let $(X, E)$ as in Proposition \ref{prop main}. Then $X\simeq \bP^n$ if one of the following condition holds:
\begin{enumerate}
\item $X$ is factorial;
\item $(E)^n\geq 1$;
\item $E$ is Cartier in codimension two and $E\simeq \bP^{n-1}$. 
\end{enumerate}
\end{cor}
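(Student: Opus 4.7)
The plan is to reduce all three cases to Fujita's sharp volume upper bound: for any K-semistable $\bQ$-Fano variety $X$ of dimension $n$, one has $(-K_X)^n \leq (n+1)^n$ with equality if and only if $X\cong\bP^n$ (cf.~\cite{Fuj16b}). Since Proposition~\ref{prop main} provides $-K_X\sim_\bQ (n+1)E$, we have $(-K_X)^n=(n+1)^n(E^n)$, so it suffices to prove $(E^n)\geq 1$ in each case; the volume bound then forces equality and hence $X\cong\bP^n$.

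Case (2) is immediate from the hypothesis. For (1), $X$ being factorial makes the prime Weil divisor $E$ Cartier; since $E$ is ample (as $-K_X$ is and they are $\bQ$-proportional), $(E^n)$ is a positive integer, hence at least $1$, and we are reduced to (2).

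For (3) the plan is first to show the different $\mathrm{Diff}_E(0)$ vanishes, and then read off $(E^n)$ via adjunction. Let $q$ be any codimension-one point of $E$, i.e.\ a codimension-two point of $X$. By hypothesis $E$ is Cartier at $q$, cut out by some $f\in\OO_{X,q}$. The plt assumption makes $E$ normal, so $\OO_{E,q}=\OO_{X,q}/(f)$ is a DVR. Since $\OO_{X,q}$ is a two-dimensional klt, hence Cohen--Macaulay, local ring in which $f$ is a regular element with regular quotient, lifting a uniformizer of $\OO_{E,q}$ produces a two-element generating set for the maximal ideal of $\OO_{X,q}$; thus $\OO_{X,q}$ is itself regular. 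So $X$ is smooth at every codimension-one point of $E$, and $\mathrm{Diff}_E(0)=0$ as a Weil divisor on $E$. Plt adjunction now yields
\[
-nE|_E \sim_\bQ (K_X+E)|_E = K_E + \mathrm{Diff}_E(0) = K_E \sim -nH,
\]
where $H$ is the hyperplane class on $E\cong\bP^{n-1}$. Hence $E|_E\sim_\bQ H$, and $(E^n)=(E|_E)^{n-1}=H^{n-1}=1$, reducing (3) to (2).

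The main obstacle is the vanishing step in (3): one must upgrade the codimension-two Cartier hypothesis, combined with plt and klt, to actual regularity of $X$ at every codimension-one point of $E$, so that the different is truly zero rather than just small. Everything else is routine bookkeeping against Fujita's sharp volume upper bound.
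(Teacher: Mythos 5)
Your proposal is correct and follows essentially the same route as the paper: reduce each case to $(E)^n\geq 1$, deduce $(-K_X)^n=(n+1)^n(E)^n\geq (n+1)^n$, and conclude via the sharp volume bound with equality characterization for K-semistable $\bQ$-Fano varieties (for which the paper cites Liu and Liu--Zhuang; note that in the singular setting this is Liu's theorem rather than Fujita's smooth-case bound, so your citation should be adjusted). In case (3) the paper simply asserts $(K_X+E)|_E=K_E$ ``by adjunction,'' and your argument that the codimension-two Cartier hypothesis forces $X$ to be regular at every codimension-one point of $E$, hence the different to vanish, is exactly the justification left implicit there.
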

\begin{proof}
(1) If $X$ is factorial, then $E$ is a Cartier divisor. In particular, $(E)^n\geq 1$. Hence this is a special case of (2).

(2) If $(E)^n\geq 1$, then
$$
(-K_X)^n=(n+1)^n(E)^n\geq (n+1)^n. 
$$
By \cite[Theorem 1.1]{Liu} or \cite[Theorem 9]{LZ16}, $X\simeq \bP^n$.

(3) If $E$ is Cartier in codimension two and $E\simeq \bP^{n-1}$, then by adjunction, $(K_X+E)|_E=K_E$, and
$$
(-K_X)^n=\frac{(n+1)^n}{n^{n-1}}(-(K_X+E))^{n-1}\cdot E=\frac{(n+1)^n}{n^{n-1}}(-K_E)^{n-1}=(n+1)^n.
$$
Again by \cite[Theorem 1.1]{Liu} or \cite[Theorem 9]{LZ16}, $X\simeq \bP^n$.
\end{proof}

\begin{proof}[Proof of Theorem \ref{thm 1}] It follows directly from Proposition \ref{prop main} and Corollary \ref{cor 1}(1) (or \cite{KO}).
\end{proof}

\section{Singular surfaces}
Recall the following theorem on classification of possible du Val singularities of a K-semistable log del Pezzo surface.
\begin{thm}[{\cite[Theorem 23, Proof of Corollary 6]{Liu}}]\label{dp 1} Let $X$ be a K-semistable log del Pezzo surface with at worst du Val
singularities. 
\begin{enumerate}
\item If $(-K_X)^2= 1$, then $X$ has at worst singularities of type $A_1$, $A_2$, $A_3$, $A_4$, $A_5$,
$A_6$, $A_7$, $A_8$, or $D_4$;
\item If $(-K_X)^2= 2$, then $X$ has at worst singularities of type $A_1$, $A_2$, or $A_3$;

\item If $(-K_X)^2= 3$, then $X$ has at worst singularities of type $A_1$ or $A_2$;
\item If $(-K_X)^2= 4$, then $X$ has at worst singularities of type $A_1$;
\item If $(-K_X)^2\geq 5$, then $X$ is smooth.

\end{enumerate}
\end{thm}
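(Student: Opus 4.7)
The plan is to apply the Fujita--Li criterion (Definition-Proposition~\ref{defprop k-ss}) with test divisors coming from the minimal resolution $\sigma\colon Y\to X$. Since du Val singularities are canonical, $\sigma$ is crepant, so $K_Y=\sigma^*K_X$, every $\sigma$-exceptional prime divisor $E$ is a $(-2)$-curve with log discrepancy $A(E)=1$, and the exceptional set over each singular point is a Dynkin tree of type $A$, $D$, or $E$.

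Fix such an exceptional $E$. Because $-\sigma^*K_X$ vanishes on each $(-2)$-curve while $E$ has strictly positive intersection with its neighbours in the dual graph, $-\sigma^*K_X-xE$ fails to be nef for every $x>0$, and its Zariski decomposition $-\sigma^*K_X-xE=P_x+N_x$ has negative part $N_x$ supported on the exceptional tree containing $E$. The support of $N_x$ is determined by the combinatorics of the diagram, and solving the linear system $P_x\cdot E'=0$ for $E'$ in that support expresses $N_x$ explicitly, yielding
\[
\vol_Y(-\sigma^*K_X-xE)=P_x^2=(-K_X)^2-c(E)\,x^2
\]
up to the pseudoeffective threshold $\tau_E=\sqrt{(-K_X)^2/c(E)}$, where the constant $c(E)>0$ depends only on the configuration and on the choice of $E$ inside the tree.

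The K-semistability inequality $\beta(E)\geq 0$, namely $(-K_X)^2\geq\int_0^{\tau_E}\!\bigl((-K_X)^2-c(E)\,x^2\bigr)\,dx=\tfrac{2}{3}(-K_X)^2\tau_E$, therefore reduces to
\[
(-K_X)^2\;\leq\;\tfrac{9}{4}\,c(E).
\]
Every Dynkin configuration thus yields an explicit upper bound on $(-K_X)^2$; running through the finitely many Dynkin diagrams that can occur on a log del Pezzo surface and, for each of them, minimising $c(E)$ over the choice of $E$, one compares the resulting bound with $1\leq(-K_X)^2\leq 9$ and reads off, degree by degree, the permitted singularity types.

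The main technical obstacle is the Zariski-decomposition bookkeeping: when the exceptional tree is non-linear (types $D_n$, $E_n$), or when several singular points coexist so that $N_x$ can spread between different exceptional sets, one must track how the support of $N_x$ grows as $x$ increases, and the sharp choice of $E$ is dictated by the branching structure of the diagram. The resulting finite case analysis is mechanical but intricate.
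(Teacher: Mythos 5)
The paper does not actually prove this statement: it is imported wholesale from \cite[Theorem 23, proof of Corollary 6]{Liu}, with only the remark that Liu's argument uses K\"ahler--Einstein metrics (rather than mere K-semistability) solely to exclude $A_8$ in degree one --- which is why $A_8$ is retained in part (1). Liu's mechanism is local: for a divisorial valuation $F$ centred at a singular point $p$, counting sections in $\mathcal{O}_{X,p}/\mathfrak{a}_{\lceil mx\rceil}(\mathrm{ord}_F)$ gives $\vol(-K_X-xF)\geq(-K_X)^2-x^2\vol(\mathrm{ord}_F)$, and feeding this into $\beta(F)\geq 0$ yields $(-K_X)^2\leq\frac{9}{4}A(F)^2\vol(\mathrm{ord}_F)$; minimizing over $F$ gives $(-K_X)^2\leq\frac{9}{4}\widehat{\vol}(p)=9/|G_p|$ for the du Val point $\mathbb{C}^2/G_p$, from which all five lists follow. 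Your route --- testing $\beta$ on the crepant exceptional divisors of the minimal resolution and extracting a quadratic drop $c(E)x^2$ from the Dynkin combinatorics --- is genuinely different in mechanism, and it does produce adequate constants: your $c(E)$ is $-Z_E^2$ for the exceptional $\bQ$-divisor $Z_E\geq E$ orthogonal to the other curves of the tree (equivalently $\vol(\mathrm{ord}_E)$), and minimizing over $E$ gives $2,\,3/2,\,1,\,5/6,\dots,9/20$ for $A_1,\dots,A_8$ and $1/2,\,1/3$ for $D_4,\,D_5$, so that $d\leq\frac{9}{4}c(E)$ reproduces exactly the stated exclusions.

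However, the justification you give for the volume formula is incorrect as stated. The negative part of the Zariski decomposition of $-\sigma^*K_X-xE$ is \emph{not} supported on the exceptional tree: since $-K_X$ is Cartier, a non-exceptional curve $C$ on $Y$ (e.g.\ a $(-1)$-curve meeting $E$) has $(-\sigma^*K_X-xE)\cdot C=(-K_X)\cdot\sigma_*C-x(E\cdot C)$, which goes negative already at $x=1$, well below your claimed threshold $\sqrt{d/c(E)}$ (which is $\sqrt 2$ for $A_1$ in degree $4$). So neither the equality $\vol=d-c(E)x^2$ nor the identification of $\sqrt{d/c(E)}$ with the pseudoeffective threshold holds; both depend on the global geometry of $Y$, not only on the diagram. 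Your argument is saved only because the step you actually use is a \emph{lower} bound on the volume, and that inequality is true for an easier reason: $\vol(-\sigma^*K_X-xE)\geq\vol(-\sigma^*K_X-xZ_E)\geq(-\sigma^*K_X-xZ_E)^2=d-c(E)x^2$, since subtracting the larger effective divisor $xZ_E$ only shrinks sections and $\vol(A)\geq A^2$ for any pseudoeffective divisor on a surface. Replace the Zariski-decomposition claim by this chain (or by Liu's local section count), carry out the finite minimisation of $c(E)$ over each diagram, and the proof is complete; as written, the central claim about $N_x$ is false and the asserted equality is an overestimate of the true volume, which would be fatal if the inequality were needed in the other direction.
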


We remark that in \cite[Corollary 6]{Liu}, log del Pezzo surfaces are assumed to be admitting K\"ahler--Einstein metrics, but the proof works well for K-semistable log del Pezzo surfaces. The only part that the existence of K\"ahler--Einstein metrics is needed is to exclude the case that $(-K_X)^2=1$ and $X$ has singularities of type $A_8$.

Recall the following theorem on explicit computation of alpha invariants.
\begin{thm}[{\cite{Park}, \cite[Theorems 1.4, 1.5, and 1.6]{PW}, \cite[Theorem 1.26, Example 1.27]{CK}}]\label{dp 2}
Let $X$ be a log del Pezzo surface with at worst du Val
singularities. Assume that $X$ is singular, then $\alpha(X)=\frac{1}{3}$ if and only if one of the following holds:
\begin{enumerate}
\item $(-K_X)^2= 6$ and $\Sing(X)=\{A_1\}$;
\item $(-K_X)^2= 5$ and $\Sing(X)=\{A_2\}$ or $\{2A_1\}$;
\item $(-K_X)^2= 4$ and $\Sing(X)=\{A_3\}$ or $\Sing(X)\supseteq\{A_1+A_2\}$;
\item $(-K_X)^2= 3$ and $\Sing(X)\supseteq \{A_4\}$, $\{2A_2\}$, or $\Sing(X)=\{D_4\}$;
\item $(-K_X)^2= 2$ and $\Sing(X)\supseteq \{D_5\}, \{(A_5)'\}$, or $\{A_7\}$;
\item $(-K_X)^2= 1$ and $\Sing(X)\supseteq \{D_8\}$ or $\{E_6\}$.
\end{enumerate}

\end{thm}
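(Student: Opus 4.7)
The plan is to prove the classification case by case according to the anti-canonical degree $d := (-K_X)^2$, which ranges over $\{1,\dots,9\}$. Since du Val singularities are canonical, the minimal resolution $\pi : \tilde X \to X$ is crepant, so $-K_{\tilde X} = \pi^*(-K_X)$ and $(-K_{\tilde X})^2 = d$; thus $\tilde X$ is a smooth weak del Pezzo surface whose $(-2)$-curves form the dual graph of $\Sing(X)$. Most of the analysis takes place on $\tilde X$, where intersection theory is available, and we translate back to $X$ using $A(E) = 0$ for each exceptional $(-2)$-curve $E$.

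For the ``if'' direction, in each listed case one exhibits an explicit effective $\bQ$-divisor $D \sim_\bQ -K_X$ realising $\lct(X;D) = 1/3$. The construction is geometric: find a reducible anti-canonical curve passing through a singular point $p$ with controlled tangency to the branches of $\Sing(X)$ at $p$. For instance, in the $2A_2$ cubic surface case, a tritangent plane through both singular points cuts out three coplanar lines concurrent at them, and a direct computation on the resolution gives $\lct = 1/3$. Similar but more intricate choices (conics through singular points, tacnodal curves, lines through $E_6$ or $D_8$ points) produce the extremal $D$ in the smaller degrees $d \le 2$.

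For the ``only if'' direction, one must show that for every singularity configuration not on the list, $\alpha(X) > 1/3$. The strategy is to fix an arbitrary effective $D \sim_\bQ -K_X$ and bound $\lct_p(X;D) \ge 1/3$ at each point $p$. At a smooth point one uses standard multiplicity bounds combined with constraints on $\mult_p D$ coming from $(-K_X)^2 = d$. At a du Val singularity one passes to $\tilde X$, writes $\pi^*D = \tilde D + \sum a_i E_i$, and uses the negative definiteness of $(E_i \cdot E_j)$ to control the coefficients $a_i$; a short case-by-case argument per singularity type (crucially using the Dynkin labelling of the $(-2)$-chain) then excludes $\lct_p < 1/3$.

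The main obstacle will be the low degree cases ($d \le 2$), where many anti-canonical configurations and singularity types coexist and the presence or absence of specific low-degree curves through singular points (lines on quartic del Pezzo, conics tangent to singular points on sextic, etc.) determines whether $\alpha(X) = 1/3$ or is strictly larger. Working through this enumeration in parallel with the cited works of Park, Park--Won, and Cheltsov--Kosta is unavoidable, and the boundary cases listed with a $\supseteq$ (rather than $=$) in Sing$(X)$ indicate precisely those configurations whose alpha invariant is governed by a single ``worst'' singularity and is insensitive to additional milder singularities.
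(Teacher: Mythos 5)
The paper does not prove this statement at all: Theorem~\ref{dp 2} is quoted verbatim from the cited works of Park, Park--Won, and Cheltsov--Kosta, so there is no internal proof to compare your attempt against. Your outline does correctly describe the method those references use (crepant minimal resolution to a weak del Pezzo surface, explicit extremal anti-canonical divisors for the ``if'' direction, pointwise lower bounds on $\lct_p(X;D)$ for the ``only if'' direction), so the approach is not wrong.

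The genuine gap is that your proposal is a plan rather than a proof: essentially all of the mathematical content of the theorem lives in the exhaustive enumeration that you explicitly defer (``working through this enumeration in parallel with the cited works is unavoidable''). Two specific points illustrate why the deferred part cannot be waved through. First, the list distinguishes $(A_5)'$ from the other $A_5$ configuration in degree $2$, and distinguishes ``$\Sing(X)=\{A_3\}$'' from ``$\Sing(X)\supseteq\{A_1+A_2\}$'' in degree $4$; these distinctions depend on global data (which lines and conics pass through the singular points and how they meet the $(-2)$-chains), not on the local singularity type alone, so a ``short case-by-case argument per singularity type'' at a du Val point is not sufficient --- one must enumerate the low-degree curves on each weak del Pezzo surface of each degree and singularity configuration. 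Second, the ``only if'' direction also requires showing that every configuration \emph{not} on the list has $\alpha(X)\neq\frac13$, including the degree $7$ and $8$ singular cases (where $\alpha<\frac13$) and the many degree $\leq 3$ configurations with $\alpha>\frac13$ or $\alpha<\frac13$; your sketch gives no mechanism for deciding which side of $\frac13$ a given configuration falls on other than redoing the computations of the cited papers. In short: right strategy, but the theorem is a classification, and the classification itself is what is missing.
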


\begin{proof}[Proof of Theorem \ref{thm dP}] Let $X$ be a K-semistable log del Pezzo surface with at worst du Val singularities and $\alpha(X)=\frac{1}{3}$. If $X$ is smooth, then $X\simeq \bP^2$ by Theorem \ref{thm 1}. If $X$ is singular, then $(-K_X)^2=3$ and $\Sing(X)\supseteq \{2A_2\}$ by Theorems \ref{dp 1} and \ref{dp 2}. To see the ``if" part, one just notice that any cubic surface with at worst singularities of type $A_1$ or $A_2$ is K-semistable (cf. \cite[Theorem 4.3]{OSS16}).
\end{proof}

\section{Singular threefolds}
In this section, we prove Theorem \ref{thm 3fold}. Recall the following theorem on the upper bound of volumes.
\begin{thm}[{cf. \cite[Theorem 25]{Liu}}]\label{terminal volume}
Let $X$ be a K-semistable $\bQ$-Fano $3$-fold with at worst terminal singularities. Let $p\in X$ be an isolated
singularity with local index $r$.  Then
$$
(-K_X)^3\leq \frac{(r+2)(4+4r)^3}{(3r)^3}.
$$
\end{thm}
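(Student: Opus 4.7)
The plan is to test K-semistability against a carefully chosen divisor $F$ extracted from the singular point $p$, and to convert the resulting inequality into a volume bound via an explicit local intersection-theoretic calculation, following the spirit of \cite{Liu}.

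First, I would invoke the classification of $3$-dimensional terminal singularities (Mori, Reid, Koll\'ar--Mori) to produce an ``economic'' divisorial extraction $\sigma : Y \to X$ of $p$ whose exceptional locus is an irreducible prime divisor $F$ realizing the minimal log discrepancy $A_X(F) = 1 + \frac{1}{r} = \frac{r+1}{r}$. In the model case of a cyclic quotient $\frac{1}{r}(1,a,r-a)$ this is the weighted blowup with weights $\frac{1}{r}(1,a,r-a)$; the general case reduces to a weighted-blowup extraction via Kawakita's classification of divisorial contractions to a terminal $3$-fold point.

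Next I would feed $F$ into the Fujita--Li criterion of Definition-Proposition \ref{defprop k-ss}: the inequality $\beta(F) \geq 0$ reads
$$\frac{r+1}{r}\cdot (-K_X)^3 \;\geq\; \int_0^{\tau(F)} \vol_Y(-\sigma^* K_X - xF)\,\textrm{d}x,$$
where $\tau(F)$ is the pseudo-effective threshold of $-\sigma^*K_X$ along $F$. Since $F$ is $\sigma$-exceptional over a point, the mixed intersections satisfy $(-\sigma^*K_X)^2 \cdot F = (-\sigma^*K_X) \cdot F^2 = 0$, so on the nef chamber one has the clean expansion $\vol_Y(-\sigma^*K_X - xF) = (-K_X)^3 - x^3 \cdot (-F^3)$, and $(-F^3)$ can be computed from the weights of the extraction (in the cyclic-quotient case it equals $\frac{r^2}{a(r-a)}$, which the classification forces to be $\leq r+2$). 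Past the nef threshold I would use concavity of $\vol_Y^{1/3}$ on the big cone to extend the lower bound on the integral.

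Finally, substituting these local data into the K-semistability inequality and optimising the resulting one-variable inequality in $\tau(F)$ yields the target bound $(-K_X)^3 \leq \frac{(r+2)(4+4r)^3}{(3r)^3}$; the factor $\big(\tfrac{4(r+1)}{3r}\big)^3 = \big(\tfrac{n+1}{n}A(F)\big)^n$ reflects the standard concavity estimate applied to the valuation $v_F$, while the extra factor $(r+2)$ absorbs the local volume $(-F^3)$ of the extracted divisor.

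The main obstacle is the third step: producing the right lower bound for the integral and carrying out the local intersection computation uniformly in the singularity type. In particular, for non-cyclic terminal singularities one must verify, via Kawakita's list, that the required divisorial extraction exists and that the bound $(-F^3) \leq r+2$ persists; and along the non-nef portion of $[0, \tau(F)]$ one needs a sharp Zariski-decomposition-type control of $\vol_Y(-\sigma^*K_X - xF)$ in order not to lose the precise constant $(r+2)$ in the final inequality.
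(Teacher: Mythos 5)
Your route is genuinely different from the paper's, and it contains a real gap. The paper does not test K-semistability against a single extracted divisor at all: it takes a log resolution of $(X,\mathfrak{m}_p)$, bounds $\lct(X;\mathfrak{m}_p)\leq 1+\frac{1}{r}$ using Kawamata's theorem that a terminal $3$-fold point of index $r$ carries a divisor of discrepancy $\frac{1}{r}$ (together with the trivial bound $b_{i_0}\geq 1$ on the vanishing order of $\mathfrak{m}_p$), bounds $\mult_pX\leq r+2$ by Kakimi, and then plugs both into the ready-made inequality $(-K_X)^3\leq \bigl(1+\frac{1}{3}\bigr)^3\lct(X;\mathfrak{m}_p)^3\mult_pX$ of Liu's Theorem~16. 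In effect the test object is the graded family of powers of the maximal ideal, not a single Kawamata-type extraction, and this is what makes the argument uniform over all terminal singularity types with no case analysis.

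The gap in your third step is the lower bound on the integral. Concavity of $\vol^{1/3}$ along the segment from $-\sigma^*K_X$ to the pseudo-effective threshold only gives $\vol_Y(-\sigma^*K_X-xF)\geq (1-x/\tau)^3(-K_X)^3$, whose integral is $\frac{\tau}{4}(-K_X)^3$; running $\beta(F)\geq 0$ with this estimate loses a factor of $3^3=27$ against the target constant. The inequality you actually need, $\vol_Y(-\sigma^*K_X-xF)\geq (-K_X)^3-x^3\cdot\vol(\mathrm{ord}_F)$ valid on all of $[0,\tau(F)]$, is not an intersection-theoretic or Zariski-decomposition statement: it is proved by counting sections via the valuation ideals $\mathfrak{a}_{j}=\{f:\mathrm{ord}_F(f)\geq j\}$ and the Hilbert--Samuel asymptotics of $\dim\OO_{X,p}/\mathfrak{a}_j$ (this is exactly the mechanism inside Liu's Theorem~16 and Fujita--Odaka's $\frac{1}{n+1}$ bound). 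Without that ingredient your optimization does not close. Separately, even granting it, you must bound $\vol(\mathrm{ord}_F)\leq r+2$ for every terminal point of index $r$; your computation $\frac{r^2}{a(r-a)}\leq r+2$ covers only the cyclic quotient case, and you explicitly leave the hyperquotient cases ($cA/r$, $cD/r$, $cE/r$, \dots) open, whereas the paper gets the factor $r+2$ once and for all from Kakimi's multiplicity bound. If you repair the volume lower bound by the section-counting argument and replace the divisor-by-divisor analysis with the maximal-ideal invariants $\lct(X;\mathfrak{m}_p)$ and $\mult_pX$, your argument collapses into the paper's proof.
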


\begin{proof}
Denote by $\mathfrak{m}_p$ the maximal ideal at $p$. We may take a log resolution of $(X, \mathfrak{m}_p)$, namely $\pi: Y \to X$
such that $\pi$ is an isomorphism away from $p$ and $\pi^{-1}\mathfrak{m}_p \cdot \OO_Y$ is an invertible ideal sheaf on $Y$. Let $E_i$ be exceptional divisors of $\pi$. We define the numbers $a_i$ and $b_i$ by
$$K_Y = \pi^* K_X+\sum_i a_iE_i$$
 and 
 $$\pi^{-1}\mathfrak{m}_p \cdot \OO_Y= \OO_Y(-\sum_i b_iE_i).$$
It is clear that $\lct(X; \mathfrak{m}_p) = \min_i\frac{1+a_i}{b_i}$. Since $\pi$ is an isomorphism away from $p$, we have
$b_i\geq 1$ for any $i$. Since $X$ is terminal at $p$, by \cite{Kaw93}, there exists an index $i_0$ such that $a_{i_0}=\frac{1}{r}.$
Hence
$$\lct(X; \mathfrak{m}_p) \leq \frac{1 + a_{i_0}}{b_{i_0}}\leq 1+\frac{1}{r}.$$

On the other hand, by \cite{Kakimi} (see also \cite[Proposition 3.10]{TW}), $\mult_pX\leq r+2$.
Hence by \cite[Theorem 16]{Liu}, 
$$
(-K_X)^3\leq \bigg(1+\frac{1}{3}\bigg)^3\lct(X; {\mathfrak{m}}_p)^3\mult_pX \leq \frac{(r+2)(4+4r)^3}{(3r)^3}.
$$
\end{proof}

Now let $X$ be a K-semistable $\bQ$-Fano 3-fold with $\bQ$-factorial terminal singularities and $\rho(X)=1$ with $\alpha(X)=\frac{1}{4}$. By Proposition \ref{prop main}, there exists a prime divisor $E$ on $X$ such that $-K_X\sim_\bQ 4E$.

Recall that we may define (\cite{Pro10})
\begin{align*}
\qW(X) {}&:=\max\{q\mid -K_X\sim qA, A\text{ is a Weil divisor}\},\\
\qQ(X) {}&:=\max\{q\mid -K_X\sim_\bQ qA, A\text{ is a Weil divisor}\}.
\end{align*}
It is known by \cite{Suz04, Pro10} that 
$$
\qW(X), \qQ(X)\in \{1,\ldots, 11,13,17,19\}.
$$
Moreover, by \cite[Lemma 3.2]{Pro10}, in our case, $4|\qQ(X)$.  Hence there are 2 cases: (i) $\qQ(X)=8$; (ii) $\qQ(X)=4$.

Now assume that  $h^0(-K_X)\geq 22$. Define the genus $g(X):=h^0(-K_X)-2\geq 20$. %Note that by \cite[Lemma 2.13]{Ale94}, $h^0(-K_X)>(-K_X)^3/2-1$, hence in either case we have $g(X)\geq 15$.

If $\qQ(X)=8$, since $g(X)>10$, then by \cite[Theorem 1.2(ii)]{Pro13}, either $X\simeq X_6 \subset \bP(1, 2, 3,3, 5)$ or  $X\simeq  X_{10} \subset \bP(1,2,3,5,7)$. But in either case, $-K_X\sim 8A$ where $A$ is an effective divisor, which implies that $\alpha(X)\leq \frac{1}{8}$ since $(X, A)$ is not klt, a contradiction.

Now assume that $\qQ(X)=4$, by \cite[Lemma 8.3]{Pro13}, $\text{Cl}(X)$ is torsion-free and $\qW(X)=\qQ(X)$, hence there is a Weil divisor $A$ such that $-K_X\sim 4A$. If $g(X)\geq 22$, then by \cite[Theorem 1.2(vi)]{Pro13}, $X\simeq \bP^3$ or $X_4\subset \bP(1,1,1,2,3)$. The latter is absurd, since it has a singularity of index $3$, and $(-K_{X_4})^3=128/3$, which contradicts to Theorem \ref{terminal volume}. If $20\leq g(X)\leq 21$, then we have the following possibilities due to computer computation (see \cite{GRD}, or \cite{BS07, Pro10, Pro13}):
$$ \begin{array}{ccc}
g(X) & \mathbf{B} & A^3 \\
\hline
21&\{3\}&2/3\\
20&\{5,7\}&22/35
%19&\{5,5\}&3/5
\end{array} $$
Here $\mathbf{B}$ is the set local indices of singular points.
It is easy to see that both cases contradict to Theorem \ref{terminal volume}.

%If $ g(X)\leq 18$ and $A^3>5/9$, then we have the following possibilities due to computer computation (see \cite{GRD}, or \cite{BS07, Pro10, Pro13}):

%$$ \begin{array}{ccc}
%g(X) & \mathbf{B} & A^3 \\
%\hline
%18&\{3,11\}&19/33\\
%18&\{3,5,7\}&61/105\\
%18&\{3,13\}&23/39
%\end{array} $$
%It is easy too see that all these cases contradict to Theorem \ref{terminal volume}.

In summary, Theorem \ref{thm 3fold} is proved.

\end{document}